\theoremstyle{plain}
\newtheorem{thm}{Theorem}[section]
\newtheorem{prop}[thm]{Proposition}
\newtheorem{cor}[thm]{Corollary}
\newtheorem{defn}[thm]{Definition}
\newtheorem{rem}[thm]{Remark}
\def\vol{\mathop{\mathrm{Vol}}\nolimits}
\numberwithin{equation}{section}
\title{Algebro-geometric semistability of polarized toric
manifolds}
\author{Hajime Ono}
\address{Department of Mathematics,
Faculty of Science and Technology,
Tokyo University of Science,
2641 Yamazaki, Noda,
Chiba 278-8510, Japan}
\email{ono\_hajime@ma.noda.tus.ac.jp}
\date{}
\begin{document}

\maketitle

\begin{abstract}
Let $\Delta\subset \mathbb{R}^n$ be an
$n$-dimensional integral Delzant polytope.
It is well-known that there exist the $n$-dimensional compact toric
manifold $X_\Delta$ and the very ample
$(\mathbb{C}^\times)^n$-equivariant line bundle $L_\Delta$ on $X_\Delta$
associated with $\Delta$.
In the present paper, we give a necessary and sufficient condition
for Chow semistability of $(X_\Delta,L_\Delta^i)$ for a maximal torus
action. We then see that asymptotic (relative) Chow semistability
implies (relative)
K-semistability for toric degenerations,
which is proved by Ross and Thomas \cite{rt},
without any knowledge of Riemann-Roch theorem and test configurations.
\end{abstract}

\section{Introduction}

Let $X$ be a compact complex manifold and $L$ an ample line bundle
on $X$. We call the pair $(X,L)$ a polarized manifold.
The one of the main subjects in K\"ahler geometry is the existence
problem of K\"ahler metrics with constant scalar curvature, more general,
of extremal K\"ahler metrics.
It is now conjectured that the existence of such metrics
in $c_1(L)$ is equivalent to some algebro-geometric stability
of $(X,L)$.
The one of the difficulty to consider this problem is the existence of
many notions of stability. We have to specify the exact one.
Though a lot of progress
is made recently in this problem by Tian, Donaldson and
many other researchers, we do not know what is appropriate stability yet.
Therefore it is important to know the relation among various notions
of stability.

In the present paper we investigate the following semistabilities
of polarized toric manifolds, asymptotic Chow semistability,
K-semistability and relative K-semistability.

Let $\Delta\subset \mathbb{R}^n$ be an $n$-dimensional integral
Delzant polytope. Namely, $\Delta$ satisfies the following conditions
(in \cite{o} polytopes satisfying these conditions are called
absolutely simple)
:
\begin{enumerate}
	\item The vertices ${\bf w}_1,\dots,{\bf w}_d$ of $\Delta$ are contained
	in $\mathbb{Z}^n$.
	\item For each $l$, there are just $n$
	rational edges $e_{l,1},
	\dots,e_{l,n}$ of $\Delta$ emanating from ${\bf w}_l$.
	\item The primitive vectors with respect to the edges $e_{l,1},
	\dots,e_{l,n}$ generate the lattice $\mathbb{Z}^n$ over $\mathbb{Z}$.
\end{enumerate}

It is well-known that $n$-dimensional integral Delzant polytopes
correspond to $n$-dimensional compact toric manifolds with 
$(\mathbb{C}^\times)^n$-equivariant very ample line bundles.
The reader is referred to \cite{o} for example.

The following is the main theorem of this paper.
(We refer the reader to the subsequent sections for notations.)

\begin{thm}\label{thm:1.1}
Let $\Delta\subset \mathbb{R}^n$ be an $n$-dimensional integral
Delzant polytope. For a positive integer $i$, 
the Chow form of
$(X_\Delta,L_\Delta^i)$
is $T^{E_\Delta(i)-1}_{\mathbb{C}}$-semistable if and only if
\begin{equation}\label{e:1.1}
P_\Delta(i;g):=E_\Delta(i)\int_{\Delta} g dv
-
\vol(\Delta)\sum_{{\bf a}\in \Delta\cap (\mathbb{Z}/i)^n}
g({\bf a})
\ge 0
\end{equation}
holds for any concave piecewise linear function $g:\Delta\to \mathbb{R}$
contained in
$PL(\Delta;i)$.
\end{thm}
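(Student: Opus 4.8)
The approach is to apply the Hilbert--Mumford numerical criterion for torus actions and to evaluate the resulting weights by means of the combinatorial description of the Chow form of a projective toric variety through the secondary polytope of Gelfand--Kapranov--Zelevinsky. Write $N+1=E_\Delta(i)$ and $A'=i\Delta\cap\mathbb Z^n=\{a_0,\dots,a_N\}$, so that the embedding of $(X_\Delta,L_\Delta^i)$ is $t\mapsto[\chi^{a_0}(t):\cdots:\chi^{a_N}(t)]\in\mathbb P^N$; let $\mathbb T=(\mathbb C^\times)^{N+1}$ be the diagonal torus (character lattice $\mathbb Z^{N+1}$, with standard basis indexed by $A'$), with $T^{E_\Delta(i)-1}_{\mathbb C}$ its intersection with $SL(N+1,\mathbb C)$, whose one--parameter subgroups are the $\lambda_\rho$, $\rho\in\mathbb Z^{N+1}$ with $\sum_a\rho_a=0$. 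I shall use freely that $i\Delta$ is a normal (IDP) lattice polytope, which holds because $\Delta$ is Delzant.

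\emph{Step 1: the weight polytope of the Chow form.} For a one--parameter subgroup $\lambda_\rho$, the flat limit $X_0=\lim_{t\to0}\lambda_\rho(t)\cdot X_\Delta$ is the toric degeneration attached to the lower convex envelope of the lifted configuration $\{(a,\rho_a):a\in A'\}$: as a cycle $X_0=\sum_\sigma n!\,\vol(\sigma)\,[\mathbb P^n_\sigma]$, the sum over the maximal cells $\sigma$ of the induced regular subdivision of $i\Delta$, where $\mathbb P^n_\sigma\subset\mathbb P^N$ is the coordinate $\mathbb P^n$ spanned by the vertices of $\sigma$; the multiplicities are the normalized volumes $n!\,\vol(\sigma)$ precisely because $i\Delta$ is normal, and $\sum_\sigma n!\,\vol(\sigma)=n!\,\vol(i\Delta)=\deg(X_\Delta,L_\Delta^i)$. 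The Chow point of a cycle is multiplicative in its components, and the Chow point of the coordinate subspace $\mathbb P^n_\sigma$ is the associated Plücker coordinate, whose $\mathbb T$--weight (an element of $\mathbb Z^{N+1}$) is the indicator vector of the vertex set of $\sigma$, up to adding a multiple of $\mathbf 1=(1,\dots,1)$ --- the only indeterminacy being the choice of equivariant structure on the Chow line. Hence the $\mathbb T$--weight of the Chow point of $X_0$ equals $\varphi_T:=\sum_\sigma n!\,\vol(\sigma)\,(\text{indicator vector of }\mathrm{vert}(\sigma))$ modulo $\mathbb R\mathbf 1$, where $T$ is the subdivision just described, so that the weight polytope of the Chow form of $(X_\Delta,L_\Delta^i)$ is the secondary polytope $\Sigma(A'):=\mathrm{conv}\{\varphi_T\}$ modulo $\mathbb R\mathbf 1$ (this is the Chow--polytope theorem of Kapranov--Sturmfels--Zelevinsky, which one may simply quote, but the preceding sketch makes the statement self--contained).

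\emph{Step 2: the numerical criterion.} By the Hilbert--Mumford criterion, the Chow point is $T^{E_\Delta(i)-1}_{\mathbb C}$--semistable if and only if $0$ lies in the image of its weight polytope under $\mathbb R^{N+1}\to\mathbb R^{N+1}/\mathbb R\mathbf 1$; by Step 1, and because every $\varphi_T$ lies on $\{\sum_a v_a=(n+1)\deg(X_\Delta,L_\Delta^i)\}$, this holds if and only if $q_0:=\frac{(n+1)\deg(X_\Delta,L_\Delta^i)}{E_\Delta(i)}\mathbf 1\in\Sigma(A')$, i.e.\ if and only if $\langle q_0,\rho\rangle\le\max_T\langle\varphi_T,\rho\rangle$ for every $\rho\in\mathbb R^{N+1}$. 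An elementary computation with piecewise--linear interpolants, using $\int_\sigma(\text{affine function})=\vol(\sigma)\cdot(\text{average of its vertex values})$ on each simplex, gives $\langle\varphi_T,\rho\rangle=(n+1)!\int_{i\Delta}(\text{p.l.\ interpolant of }\rho\text{ on }T)\,dv$; since every such interpolant is pointwise bounded by the concave envelope $\widehat\rho$ of $\{(a,\rho_a)\}$, with equality after a suitable refinement, $\max_T\langle\varphi_T,\rho\rangle=(n+1)!\int_{i\Delta}\widehat\rho\,dv$. Substituting $\deg(X_\Delta,L_\Delta^i)=n!\,\vol(i\Delta)$ and rescaling $i\Delta$ back to $\Delta$ (so that $A'$ becomes $\Delta\cap(\mathbb Z/i)^n$, $\vol(i\Delta)=i^n\vol(\Delta)$, $\int_{i\Delta}\widehat\rho\,dv=i^n\int_\Delta\widehat\rho\,dv$, and $\widehat\rho$ is now a concave function on $\Delta$), the membership $q_0\in\Sigma(A')$ becomes: $\vol(\Delta)\sum_{a\in\Delta\cap(\mathbb Z/i)^n}\rho_a\le E_\Delta(i)\int_\Delta\widehat\rho\,dv$ for all $\rho$.

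\emph{Step 3: recognition of \eqref{e:1.1}, and the main obstacle.} It suffices to test the above for those $\rho$ with $\rho=\widehat\rho|_{\Delta\cap(\mathbb Z/i)^n}$ (every lattice point already on the concave envelope): replacing a general $\rho$ by $\widehat\rho|_{\Delta\cap(\mathbb Z/i)^n}$ leaves $\widehat\rho$ unchanged and can only increase $\sum_a\rho_a$. Such $\rho$ are exactly the restrictions of the concave piecewise--linear functions $g\in PL(\Delta;i)$, and for them the inequality reads $\vol(\Delta)\sum_a g(a)\le E_\Delta(i)\int_\Delta g\,dv$, which is precisely $P_\Delta(i;g)\ge0$; the reduction from real to integral $\rho$ is handled by clearing denominators and invoking density of the rational $g$ together with the linearity and homogeneity of $P_\Delta(i;\cdot)$ in $g$. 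The step I expect to be the genuine obstacle is Step 1: establishing the toric degeneration $X_0$ with its normalized--volume multiplicities and computing the $\mathbb T$--weight of its Chow point, since this is where the two volume normalizations, the $\mathbb Z^n$--versus--$(\mathbb Z/i)^n$ rescaling, and the harmless $\mathbf 1$--ambiguity in the equivariant structure of the Chow line all have to be reconciled; Steps 2 and 3 are then a careful but routine unwinding.
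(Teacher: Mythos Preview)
Your argument is correct and follows essentially the same route as the paper. Both proofs (i) identify the $(\mathbb C^\times)^{E_\Delta(i)}$--weight polytope of the Chow form with the secondary/Chow polytope of the point configuration $i\Delta\cap\mathbb Z^n$, (ii) reduce $T^{E_\Delta(i)-1}_{\mathbb C}$--semistability to membership of the scalar multiple of $\mathbf 1$ in that polytope, (iii) rewrite membership via the support function, and (iv) invoke the Gelfand--Kapranov--Zelevinsky computation that the support function of the secondary polytope at $\varphi$ equals $(n+1)!\int g_\varphi\,dv$. The paper simply packages (i)--(ii) as Propositions~4.5 and~4.6 and quotes GKZ, Chapter~7, Lemma~1.9 for (iv), whereas you sketch the Kapranov--Sturmfels--Zelevinsky degeneration argument behind (i) and redo the support--function computation by hand; your Step~3 makes explicit the passage from $\sum_a\varphi(a)$ to $\sum_a g_\varphi(a)$ that the paper absorbs into ``by the definition of $g_\varphi$''. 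In short, what you flagged as the genuine obstacle (Step~1) is exactly the content the paper outsources to the cited results, and the rest of your write--up matches the paper's short proof.
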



This paper is organized as follows. In Section $2$ we fix some notations.
In Section $3$ we review some basics of geometric invariant theory
briefly for later use. In Section $4$, we first define the Chow form
of submanifolds of a projective space.
Then Chow semistability is defined as
GIT-semistability of the Chow form. Finally we prove Theorem \ref{thm:1.1}.
In Section $5$, we compare asymptotic Chow semistability
with K-semistability of polarized toric manifolds
through Theorem \ref{thm:1.1}.
In Section $6$, by analogy with Chow semistability,
we define the notion of relative Chow semistability of polarized toric
manifolds in toric sense and prove that asymptotic relative Chow semistability
in toric sense
implies relative K-semistability for toric degenerations.

\section{Preliminaries}

In this section we fix some notations. Let 
$\Delta\subset \mathbb{R}^n$ be an $n$-dimensional integral Delzant
polytope.
We denote the Ehrhart polynomial
of $\Delta$ by
$E_\Delta(t)$, which is a polynomial of degree $n$ satisfying
$$
E_\Delta(i)=\#(i\Delta\cap \mathbb{Z}^n)=\#(\Delta\cap (\mathbb{Z}/i)^n)
$$
for each positive integer $i$. It is well-known that
such a polynomial exists and the coefficients of 
$n$-th and $(n-1)$-th order terms are
$\vol(\Delta)$ and $\vol(\partial \Delta)/2$ respectively;
\begin{equation}\label{e:2.1}
E_\Delta(t)= \vol(\Delta)t^n+\frac{\vol(\partial \Delta)}{2}t^{n-1}
+O(t^{n-2}).
\end{equation}
Here the volume form $d\sigma$ on $\partial \Delta$ is defined as follows.
On a facet $\{h_r=c_r\}\cap \Delta$ of $\Delta$, where $h_r$ is a
primitive linear form, $dh_r\wedge d\sigma$ equals to the Euclidean
volume form $dv$.

In Section $4$ we will consider representations of the complex torus
$(\mathbb{C}^\times)^{E_\Delta(i)}$. The character group of this torus
can be identified with
$\{i\Delta\cap \mathbb{Z}^n\to \mathbb{Z}\}\simeq \{\Delta\cap (\mathbb{Z}/i)^n
\to \mathbb{Z}\}\simeq \mathbb{Z}^{E_\Delta(i)}$.
Then we denote
\begin{equation*}
W(i\Delta):=\{i\Delta\cap \mathbb{Z}^n\to \mathbb{R}\}
\simeq \{\Delta\cap (\mathbb{Z}/i)^n
\to \mathbb{R}\}\simeq \mathbb{R}^{E_\Delta(i)},
\end{equation*}
\begin{equation*}
W(i\Delta)_{\mathbb{Q}}:=\{i\Delta\cap \mathbb{Z}^n\to \mathbb{Q}\}
\simeq \{\Delta\cap (\mathbb{Z}/i)^n
\to \mathbb{Q}\}\simeq \mathbb{Q}^{E_\Delta(i)}.
\end{equation*}
We identify $W(i\Delta)$ with its dual space by the scalar product
\begin{equation*}
(\varphi,\psi):=\sum_{{\bf a}\in \Delta\cap (\mathbb{Z}/i)^n}\varphi({\bf a})
\psi({\bf a}).
\end{equation*}

For each $\varphi\in W(i\Delta)$, define a
concave piecewise linear function $g_\varphi:\Delta\to \mathbb{R}$ as follows.
Let
\begin{equation*}
G_\varphi:=\text{the convex hull of }
\bigcup_{{\bf a} \in \Delta\cap (\mathbb{Z}/i)^n}\{({\bf a},t)\,|\,
t\le \varphi({\bf a})\}
\subset \mathbb{R}^n \times \mathbb{R}.
\end{equation*}
Then we define $g_\varphi$ by
$$g_\varphi({\bf x}):=\max\{t\,|\,
({\bf x},t)\in G_\varphi\},\ \ {\bf x}\in \Delta$$
and denote
$$PL(\Delta;i):=\{g_\varphi\,|\,
\varphi\in W(i\Delta)\},\ \ 
PL_\mathbb{Q}(\Delta;i):=\{g_\varphi\,|\,
\varphi\in W_{\mathbb{Q}}(i\Delta)\}.
$$
\begin{rem}
It is easy to see that
if $g:\Delta\to \mathbb{R}$ is a rational concave piecewise linear
function, then there is a positive integer $i$ such that
$g\in PL_{\mathbb{Q}}(\Delta;i)$.
\end{rem}
\begin{rem}
If $g\in PL(\Delta;i)$ (resp. $g\in PL_{\mathbb{Q}}(\Delta;i)$),
then $g\in PL(\Delta;ki)$ (resp. $g\in PL_{\mathbb{Q}}(\Delta;ki)$)
for any positive integer $k$.
\end{rem}

\section{GIT-stability}

Let $G$ be a reductive Lie group. Suppose that $G$ acts on a
complex vector space $V$ linearly. We call a nonzero vector $v\in V$
$G$-semistable
if the closure of the orbit $Gv$ does not contain the origin.
Similarly we call $p\in \mathbb{P}(V)$ $G$-semistable if any representative
of $p$ in $V\setminus\{{\bf 0}\}$ is $G$-semistable.
It is well-known that there is the following
good criterion for $v$ being $G$-semistable, see
\cite{git}.

\begin{prop}[Hilbert-Mumford criterion, \cite{git}]\label{prop:3.1}
$p\in \mathbb{P}(V)$ is $G$-semistable if and only if
$p$ is $H$-semistable for each maximal torus $H\subset G$.
\end{prop}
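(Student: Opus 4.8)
The plan is to reduce both implications to the numerical form of the Hilbert--Mumford criterion, which I would take as the fundamental input from \cite{git}: for a nonzero $v \in V$, one has ${\bf 0} \in \overline{Gv}$ if and only if there exists a one-parameter subgroup $\lambda \colon \mathbb{C}^\times \to G$ with $\lim_{t\to 0}\lambda(t)\cdot v = {\bf 0}$. Note first that semistability of $p$ does not depend on the chosen representative $v$, since scaling is a homeomorphism of $V$ fixing the origin, whence $\overline{G(cv)} = c\,\overline{Gv}$ for $c \in \mathbb{C}^\times$; thus the condition is well defined on $\mathbb{P}(V)$. Granting the numerical criterion, the passage to maximal tori is then a short deduction.

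First I would dispose of the easy implication. If $p$ is $G$-semistable, fix a representative $v$; then for every maximal torus $H \subset G$ we have $Hv \subset Gv$, hence $\overline{Hv} \subset \overline{Gv}$, and since ${\bf 0} \notin \overline{Gv}$ we conclude ${\bf 0} \notin \overline{Hv}$. Thus $p$ is $H$-semistable for every maximal torus $H$ — indeed for every subgroup of $G$. This uses nothing beyond monotonicity of orbit closures under inclusion of groups.

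The substantive direction I would prove by contraposition. Suppose $p$ is not $G$-semistable, so that a representative $v$ satisfies ${\bf 0} \in \overline{Gv}$. By the numerical criterion there is a one-parameter subgroup $\lambda$ with $\lim_{t\to 0}\lambda(t)v = {\bf 0}$. The Zariski closure of $\lambda(\mathbb{C}^\times)$ is a subtorus $S \subset G$, and by the structure theory of reductive groups every torus lies in a maximal one; I would fix a maximal torus $H \supset S$. Since $\lambda(t)v \in Hv$ for all $t$, the limit ${\bf 0}$ lies in $\overline{Hv}$, so $p$ is not $H$-semistable for this particular $H$. Contrapositively, if $p$ is $H$-semistable for every maximal torus, then $p$ is $G$-semistable, which completes the argument.

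The hard part is entirely the numerical criterion itself: the implication producing a destabilizing one-parameter subgroup from the condition ${\bf 0} \in \overline{Gv}$ is the analytic-algebraic core of GIT, resting on the properness of suitable morphisms and a limiting argument that reduces the general reductive case to a single $\mathbb{C}^\times$-action. I would not reprove this but cite it from \cite{git}. Once it is granted, the two remaining ingredients — monotonicity of orbit closures and the embedding of any torus into a maximal torus — are elementary, so the maximal-torus reduction stated in the proposition follows immediately.
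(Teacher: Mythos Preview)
Your argument is correct and is the standard deduction of the maximal-torus formulation from the numerical Hilbert--Mumford criterion. However, there is nothing to compare against: the paper does not give a proof of this proposition at all. It is stated as a classical result with a reference to \cite{git}, and the paper immediately moves on to the torus case (Proposition~\ref{prop:3.3}) without further comment. So you have supplied a proof where the paper simply cites one; your reduction (numerical criterion $\Rightarrow$ destabilizing one-parameter subgroup $\Rightarrow$ embed into a maximal torus) is exactly the usual way this statement is derived, and the only nontrivial input---the numerical criterion itself---you rightly defer to \cite{git}.
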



Hence it is important to study $G$-semistability
when $G$ is isomorphic to an algebraic torus $(\mathbb{C}^\times )^n$.
Let $G$ be isomorphic to $(\mathbb{C}^\times )^n$.
Then a $G$-module $V$ is decomposed as
\begin{equation}\label{e:3.1}
V=\sum_{\chi\in \chi(G)}V_\chi,\ \ V_\chi:=\{v\in V\,|\,
t\cdot v=\chi(t)v,\ \forall t\in G\},
\end{equation}
where $\chi(G)\simeq \mathbb{Z}^n$ is the character group of the torus $G$.

\begin{defn}\label{def:3.2}
Let $v=\sum_{\chi\in \chi(G)}v_\chi$ be a nonzero vector in $V$.
The weight polytope $\textup{Wt}\,_G(v)\subset \chi(G)\otimes_{\mathbb{Z}}
\mathbb{R}$ of $v$ is the convex hull of $\{\chi\in \chi(G)\,|\,
v_\chi\not={\bf 0}\}$ in $\chi(G)\otimes_{\mathbb{Z}}
\mathbb{R}$.
\end{defn}

The following fact about $G$-semistability is standard.

\begin{prop}\label{prop:3.3}
Let $G$ be isomorphic to $(\mathbb{C}^\times)^n$. Suppose that $G$ acts
a complex vector space $V$ linearly. Then a nonzero vector $v\in V$ is
$G$-semistable if and only if the weight polytope $\textup{Wt}\,_G(v)$
contains the origin.
\end{prop}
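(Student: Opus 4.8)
The plan is to prove both implications by exploiting the explicit decomposition of $V$ into weight spaces $V_\chi$. Since $G \simeq (\mathbb{C}^\times)^n$ is a torus, I can pick coordinates identifying $\chi(G)$ with $\mathbb{Z}^n$ and $G$ with $(\mathbb{C}^\times)^n$, so that the action on $v = \sum_\chi v_\chi$ is $t \cdot v = \sum_\chi t^\chi v_\chi$, where $t^\chi = t_1^{\chi_1}\cdots t_n^{\chi_n}$ for $\chi = (\chi_1,\dots,\chi_n)$. The strategy is then to translate the condition ``$\overline{Gv} \ni \mathbf{0}$'' into a statement about the one-parameter subgroups of $G$ (equivalently, the possible directions along which the orbit can degenerate), and to match that with the condition on $\mathrm{Wt}_G(v)$.

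First I would establish the easy direction: if $\mathbf{0} \notin \mathrm{Wt}_G(v)$, then $v$ is $G$-semistable. Since $\mathrm{Wt}_G(v)$ is a compact convex set not containing the origin, by the separating hyperplane theorem there is a vector $\lambda \in \chi(G)^\vee \otimes \mathbb{R}$ (which I can take rational, hence after scaling integral) and a constant $c > 0$ with $\langle \lambda, \chi \rangle \ge c$ for every $\chi$ with $v_\chi \ne \mathbf{0}$. This $\lambda$ defines a one-parameter subgroup $s \mapsto \lambda(s) \in G$, and $\lambda(s) \cdot v = \sum_\chi s^{\langle \lambda,\chi\rangle} v_\chi$. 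As $s \to 0$ all exponents $\langle\lambda,\chi\rangle$ are bounded below by $c>0$, so $\lambda(s)\cdot v \to \mathbf{0}$; but this shows $\mathbf{0}$ lies in the closure of the orbit. Wait — that is the \emph{wrong} direction, so in fact I would argue the contrapositive carefully: I claim that if $\overline{Gv} \ni \mathbf{0}$, then necessarily some one-parameter subgroup degenerates $v$ to $\mathbf{0}$ (this is essentially the Hilbert--Mumford criterion, Proposition~\ref{prop:3.1}, applied to the torus $G$ itself, which is its own unique maximal torus), and the existence of such a one-parameter subgroup $\lambda$ forces $\langle \lambda, \chi\rangle > 0$ for all $\chi$ with $v_\chi \ne \mathbf{0}$, hence the origin is strictly separated from $\mathrm{Wt}_G(v)$ by the hyperplane $\langle\lambda,\cdot\rangle = 0$, i.e. $\mathbf{0}\notin \mathrm{Wt}_G(v)$.

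For the converse — $\mathbf{0} \in \mathrm{Wt}_G(v)$ implies $G$-semistability — I would argue that no one-parameter subgroup can send $v$ to $\mathbf{0}$: for any integral $\lambda$, the limit $\lim_{s\to 0}\lambda(s)\cdot v$ exists only if $\langle\lambda,\chi\rangle \ge 0$ for all relevant $\chi$, and equals $\mathbf{0}$ only if $\langle\lambda,\chi\rangle > 0$ for all of them; but then $\langle\lambda,\cdot\rangle$ would be a linear functional positive on all vertices of $\mathrm{Wt}_G(v)$, hence positive at $\mathbf{0} \in \mathrm{Wt}_G(v)$ by convexity, a contradiction. By the Hilbert--Mumford criterion the absence of a destabilizing one-parameter subgroup gives $G$-semistability. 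The main obstacle, and the point requiring the most care, is the reduction of ``$\mathbf{0}\in\overline{Gv}$'' to ``some one-parameter subgroup degenerates $v$ to $\mathbf{0}$'': this is where the content of the Hilbert--Mumford theorem enters, and one must be precise that for a torus the statement holds with the torus itself playing the role of the maximal torus, so that Proposition~\ref{prop:3.1} is exactly what is needed. A clean alternative that avoids invoking Hilbert--Mumford twice is to use the combinatorial description of torus orbit closures directly: $\overline{Gv}$ is the union, over faces $F$ of $\mathrm{Wt}_G(v)$, of the orbits $G\cdot v_F$ where $v_F := \sum_{\chi \in F} v_\chi$, and $\mathbf{0}$ occurs among these $v_F$ precisely when the empty face is a face of the polytope, i.e. precisely when $\mathbf{0}$ is \emph{not} in $\mathrm{Wt}_G(v)$; I would likely present this orbit-closure description as the cleanest route, citing \cite{git} for the underlying toric fact.
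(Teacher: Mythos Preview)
The paper does not give a proof of Proposition~\ref{prop:3.3}; it is simply announced as a standard fact about GIT-semistability, so there is no argument in the paper to compare your proposal against.

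Your reasoning is essentially correct, and the two ingredients you isolate---the separating-hyperplane construction of a destabilising one-parameter subgroup when ${\bf 0}\notin\textup{Wt}_G(v)$, and the Hilbert--Mumford numerical criterion for the converse---are exactly the standard proof. Two comments on the write-up. First, the self-correction (``Wait --- that is the wrong direction'') should be excised: the computation you carried out there \emph{is} already a valid proof of the implication ``${\bf 0}\notin\textup{Wt}_G(v)\Rightarrow v$ unstable'' (contrapositive of semistable $\Rightarrow{\bf 0}\in\textup{Wt}_G(v)$); only the opening sentence was misstated. You then prove the other implication twice, once at the end of the first paragraph and again in the second, so one of these can be dropped. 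Second, be aware that Proposition~\ref{prop:3.1} as stated in the paper (semistability for $G$ $\Leftrightarrow$ semistability for every maximal torus) is not literally the form of Hilbert--Mumford you invoke; what you actually need is the numerical criterion that instability is always witnessed by a one-parameter subgroup. This is of course also in \cite{git}, but you should cite it as such rather than via Proposition~\ref{prop:3.1}. Your alternative route through the face structure of torus orbit closures avoids this issue and is a clean way to package the argument.
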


Let $G=(\mathbb{C}^\times)^n$ and $H$ be the subtorus
\begin{equation}\label{e:3.2}
H=\{(t_1,\dots,t_{n-1},(t_1\cdots t_{n-1})^{-1})\,|\,
(t_1,\dots,t_{n-1})\in (\mathbb{C}^\times)^{n-1}
\}\simeq (\mathbb{C}^\times)^{n-1}.
\end{equation}
Then the weight polytope $\text{Wt}\,_H(v)\subset \chi(H)\otimes_{\mathbb{Z}}
\mathbb{R}\simeq \mathbb{R}^{n-1}$ equals to $\pi(\text{Wt}\,_G(v))$, where
the linear map $\pi:\mathbb{R}^n\to \mathbb{R}^{n-1}$ is given as
$(x_1,\dots,x_{n-1},x_n)\mapsto (x_1-x_n,\dots,x_{n-1}-x_n)$.

Therefore we see the following.

\begin{prop}\label{prop:3.4}
$v$ is $H$-semistable if and only if there exists $t\in \mathbb{R}$
such that $(t,\dots,t)\in \textup{Wt}\,_G(v)$.
\end{prop}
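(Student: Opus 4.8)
The plan is to derive this as a formal consequence of Proposition~\ref{prop:3.3} applied to $H$, together with the description of $\textup{Wt}_H(v)$ as the projection $\pi(\textup{Wt}_G(v))$ recorded just above the statement. First I would apply Proposition~\ref{prop:3.3} with the torus $H\simeq(\mathbb{C}^\times)^{n-1}$ in place of $G$: a nonzero $v\in V$ is $H$-semistable if and only if the weight polytope $\textup{Wt}_H(v)\subset\chi(H)\otimes_{\mathbb{Z}}\mathbb{R}\simeq\mathbb{R}^{n-1}$ contains the origin.

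Next I would invoke the identity $\textup{Wt}_H(v)=\pi(\textup{Wt}_G(v))$. For completeness one records why it holds: the inclusion $H\hookrightarrow G$ induces the restriction map $\chi(G)\to\chi(H)$ on character lattices, which under the identifications $\chi(G)\simeq\mathbb{Z}^n$, $\chi(H)\simeq\mathbb{Z}^{n-1}$ is exactly $\pi$; the $H$-weight space $V^H_\mu$ is the direct sum of those $G$-weight spaces $V_\chi$ with $\pi(\chi)=\mu$, so since distinct $G$-weight spaces are linearly independent there is no cancellation, and the $H$-weight $\mu$ occurs in $v$ precisely when some $G$-weight $\chi$ with $\pi(\chi)=\mu$ occurs in $v$. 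Taking convex hulls and using the linearity of $\pi$ then gives $\textup{Wt}_H(v)=\pi(\textup{Wt}_G(v))$.

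Finally, combining the two steps, $v$ is $H$-semistable if and only if $0\in\pi(\textup{Wt}_G(v))$, i.e.\ if and only if $\textup{Wt}_G(v)$ meets the fiber $\pi^{-1}(0)$. Since $\pi(x_1,\dots,x_n)=(x_1-x_n,\dots,x_{n-1}-x_n)$, this fiber is precisely the diagonal line $\{(t,\dots,t)\mid t\in\mathbb{R}\}\subset\mathbb{R}^n$. Hence $v$ is $H$-semistable if and only if $(t,\dots,t)\in\textup{Wt}_G(v)$ for some $t\in\mathbb{R}$, which is the assertion.

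As for the main difficulty: there is essentially none of substance, since the statement is a direct combination of Proposition~\ref{prop:3.3} with the already-established equality $\textup{Wt}_H(v)=\pi(\textup{Wt}_G(v))$ and the trivial computation of $\pi^{-1}(0)$. The only point deserving a word of care is the no-cancellation remark used to identify $\textup{Wt}_H(v)$ with the image of $\textup{Wt}_G(v)$, and even that is immediate from the linear independence of weight spaces attached to distinct characters.
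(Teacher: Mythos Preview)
Your proposal is correct and follows exactly the route the paper takes: the paper simply states ``Therefore we see the following'' after recording Proposition~\ref{prop:3.3} and the identity $\textup{Wt}_H(v)=\pi(\textup{Wt}_G(v))$, so your argument is precisely the intended one, only with more detail supplied (in particular your justification of the no-cancellation point, which the paper leaves implicit).
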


\section{Chow semistability of polarized toric manifolds}

We first define the Chow form of irreducible projective varieties.
See \cite{gkz} for more detail. 

\begin{defn}\label{def:4.1}
Let $X\subset \mathbb{C}P^N$ be an $n$-dimensional irreducible subvariety of
degree $d$. It is easy to see that the subset $Z_X$ of the
Grassmannian $\textup{Gr}(N-n-1,\mathbb{C}P^N)$ defined by
$$Z_X=\{L\in \textup{Gr}(N-n-1,\mathbb{C}P^N)\,|\,
L\cap X\not=\emptyset\}$$
is an irreducible hypersurface of degree $d$.
Hence $Z_X$ is given by the vanishing of a degree $d$ element $[R_X]\in
\mathbb{P}(\mathcal B_d(N-n-1,\mathbb{C}P^N))$, where
$\mathcal B(N-n-1,\mathbb{C}P^N)=\oplus_{d}\mathcal B_d(N-n-1,\mathbb{C}P^N)$
is the graded coordinate ring of the Grassmannian. We call $R_X$ the
Chow form of $X$.
\end{defn}

Since the special linear group $SL(N+1,\mathbb{C})$ acts naturally on
$\mathcal B_d(N-n-1,\mathbb{C}P^N)$, we can consider the 
$SL(N+1,\mathbb{C})$-stability
of the Chow form $R_X$.
\begin{defn}\label{def:4.2}
Let $X\subset \mathbb{C}P^N$ be an $n$-dimensional irreducible subvariety
of degree $d$. We call $X$ Chow semistable if the Chow form $R_X$
is $SL(N+1,\mathbb{C})$-semistable.
When $X$ is not Chow semistable $X$ is called Chow unstable.
\end{defn}

\begin{defn}\label{def:4.3}
Let $L$ be a very ample line bundle on a complex manifold $X$.
$(X,L)$ is called 
Chow semistable when $\Psi(X)\subset \mathbb{P}(H^0(X;L)^*)$
is Chow semistable.
Here $\Psi:X\to 
\mathbb{P}(H^0(X;L)^*)$
is the Kodaira embedding.
\end{defn}

\begin{defn}\label{def:4.4}
Let $(X,L)$ be a polarized manifold. We call $(X,L)$ asymptotically
Chow semistable if $(X,L^i)$ is Chow semistable for any sufficiently 
large integer $i$.
\end{defn}

Now we investigate Chow semistability of $(X_\Delta,L_\Delta^i)$
for an $n$-dimensional integral Delzant polytope $\Delta$ and a positive
integer $i$. By Hilbert-Mumford criterion, Proposition \ref{prop:3.1},
$H$-semistability of the Chow form is essential for any maximal torus
$H$ of $SL(E_\Delta(i))$.
As a specific case, we take the following maximal torus;
$$
T_{i\Delta}^{\mathbb{C}}:=(\mathbb{C}^\times )^{E_\Delta(i)}\cap
SL(E_\Delta(i)).
$$
Here $(\mathbb{C}^\times)^{E_\Delta(i)}\subset GL(E_\Delta(i))$
is the torus consisting of diagonal matrices. Note that 
$T_{i\Delta}^{\mathbb{C}}$ is the subtorus of
$(\mathbb{C}^\times)^{E_\Delta(i)}$ given as \eqref{e:3.2}.

\begin{prop}[\cite{gkz}, see also \cite{ono}]\label{prop:4.5}
Let $Ch_{i\Delta}$ be the weight polytope of the Chow form of
$(X_\Delta,L_\Delta^i)$ for $(\mathbb{C}^\times)^{E_\Delta(i)}$-action.
Then the affine hull of $Ch_{i\Delta}$ in $W(i\Delta)$ is
\begin{equation}\label{e:4.1}
\left\{\varphi\in W(i\Delta)\,\Bigg|\,
\sum_{{\bf a}\in \Delta\cap (\mathbb{Z}/i)^n}\varphi({\bf a})=(n+1)!
\vol(i\Delta),\ 
\sum_{{\bf a}\in \Delta\cap (\mathbb{Z}/i)^n}i\varphi({\bf a})
{\bf a}=(n+1)!\int_{i\Delta}{\bf x}dv
\right\}.
\end{equation}
\end{prop}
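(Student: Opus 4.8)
The plan is to compute the weight polytope $Ch_{i\Delta}$ explicitly from the combinatorial description of the Chow form of a toric variety, and then read off its affine hull. First I would recall from \cite{gkz} that for the embedding $\Psi:X_\Delta\hookrightarrow \mathbb{P}(W(i\Delta)^*)$ given by the lattice points $\Delta\cap(\mathbb{Z}/i)^n$, the Chow form $R_{X_\Delta}$ is, up to scalar, a product over the maximal simplices $\sigma$ of a generic triangulation (or equivalently a sum of such products weighted by multiplicities) of certain brackets, and that the exponents appearing in its monomials are governed by the lattice points lying in each simplex. More usefully, I would use the known fact that the $(\mathbb{C}^\times)^{E_\Delta(i)}$-weights of the Chow form are exactly the lattice points of the secondary-type polytope associated to $i\Delta$; concretely, a weight $\varphi\in W(i\Delta)$ occurs in $R_{X_\Delta}$ iff $\varphi$ arises as the ``volume vector'' $\varphi_{\mathcal{T}}({\bf a}) = \sum_{\sigma\ni {\bf a}} \mathrm{Vol}(\sigma)/(\dim\sigma+1)$ of some regular triangulation $\mathcal{T}$ of $i\Delta$ using the points of $i\Delta\cap\mathbb{Z}^n$. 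Each such $\varphi_{\mathcal{T}}$ assigns to a lattice point the appropriately normalized sum of volumes of the simplices containing it.

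The two defining linear equations of the affine hull then follow by evaluating two linear functionals on these vertex weights $\varphi_{\mathcal{T}}$, and noting the values are independent of $\mathcal{T}$. For the first equation, sum $\varphi_{\mathcal{T}}({\bf a})$ over all ${\bf a}\in i\Delta\cap\mathbb{Z}^n$: interchanging the order of summation, each simplex $\sigma$ of dimension $n$ is counted once for each of its $n+1$ vertices, contributing $(n+1)\cdot \mathrm{Vol}(\sigma)/(n+1)\cdot(\text{normalization})$, and since the simplices tile $i\Delta$ the total is a fixed multiple of $\mathrm{Vol}(i\Delta)$, namely $(n+1)!\,\mathrm{Vol}(i\Delta)$ with the normalization conventions of \eqref{e:4.1}. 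For the second (vector-valued) equation, compute $\sum_{\bf a}\varphi_{\mathcal{T}}({\bf a})\,{\bf a}$; again interchanging sums, each $n$-simplex $\sigma$ with vertices ${\bf v}_0,\dots,{\bf v}_n$ contributes $\frac{\mathrm{Vol}(\sigma)}{n+1}\sum_{j}{\bf v}_j = \mathrm{Vol}(\sigma)\cdot(\text{barycenter of }\sigma)= (n+1)\int_\sigma {\bf x}\,dv$, using the elementary fact that the barycenter of a simplex is the average of its vertices and $\int_\sigma {\bf x}\,dv = \mathrm{Vol}(\sigma)\cdot\mathrm{bary}(\sigma)$. Summing over the triangulation gives $(n+1)!\int_{i\Delta}{\bf x}\,dv$, again independent of $\mathcal{T}$; the factor $i$ in front of $\varphi({\bf a})$ in \eqref{e:4.1} is just the passage from $i\Delta\cap\mathbb{Z}^n$ to $\Delta\cap(\mathbb{Z}/i)^n$ under ${\bf a}\mapsto {\bf a}/i$.

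It remains to check that the affine hull is \emph{exactly} cut out by these two equations, i.e.\ that $Ch_{i\Delta}$ has the maximal possible dimension $E_\Delta(i)-n-1$ consistent with them. This is where I would invest the real effort: one must exhibit enough distinct regular triangulations of $i\Delta$ so that the associated weight vectors $\varphi_{\mathcal{T}}$ affinely span the codimension-$(n+1)$ subspace. A clean way is to invoke the general theorem (see \cite{gkz}) that the secondary polytope of a point configuration spanning $\mathbb{R}^n$ affinely has dimension (number of points) $-n-1$ precisely, and that the vertex weights of the Chow form coincide with (a dilate/translate of) the vertices of this secondary polytope; Delzant-ness guarantees the point configuration $i\Delta\cap\mathbb{Z}^n$ is full-dimensional, so the hypothesis is met. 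With that dimension count in hand, the two equations above, being valid on all vertices and independent, must span the full orthogonal complement, proving \eqref{e:4.1}. The main obstacle is thus not the two computations, which are routine, but correctly citing and applying the secondary-polytope description of the toric Chow form together with its sharp dimension formula.
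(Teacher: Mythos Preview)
The paper does not actually prove Proposition~4.5; it is stated with attribution to \cite{gkz} (and \cite{ono}) and used as a black box in the proof of Theorem~\ref{thm:1.1}. So there is no ``paper's own proof'' to compare against.

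That said, your outline is essentially the standard GKZ argument and is correct in structure. A few remarks. First, your formula $\varphi_{\mathcal T}({\bf a})=\sum_{\sigma\ni{\bf a}}\mathrm{Vol}(\sigma)/(\dim\sigma+1)$ is not the usual GKZ characteristic function; in \cite{gkz} one sets $\varphi_{\mathcal T}({\bf a})=\sum_{\sigma\ni{\bf a}}\mathrm{vol}(\sigma)$ with $\mathrm{vol}$ the normalized lattice volume (so $\mathrm{vol}=n!\,\mathrm{Vol}$). With that normalization the two sums you compute give exactly $(n+1)!\,\mathrm{Vol}(i\Delta)$ and $(n+1)!\int_{i\Delta}{\bf x}\,dv$, matching \eqref{e:4.1} without any further rescaling; you should fix this or at least be explicit that a global rescaling is being absorbed. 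Second, the identification ``Chow polytope $=$ secondary polytope'' for projective toric varieties is the theorem of Kapranov--Sturmfels--Zelevinsky \cite{ksz} (also in \cite{gkz}); you should cite it precisely rather than describing it as ``known.'' Third, the dimension count $\dim Ch_{i\Delta}=E_\Delta(i)-n-1$ is indeed the secondary polytope dimension formula, and Delzant-ness is more than enough to ensure the configuration $i\Delta\cap\mathbb Z^n$ affinely spans $\mathbb R^n$ (in fact the vertices of $i\Delta$ already do). With these citations in place, the two elementary summation identities you wrote down (swap sums, use barycenter $=\frac{1}{n+1}\sum_j{\bf v}_j$ and $\int_\sigma{\bf x}\,dv=\mathrm{Vol}(\sigma)\cdot\mathrm{bary}(\sigma)$) complete the argument.
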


Therefore we have the following by Proposition \ref{prop:3.4}.

\begin{prop}\label{prop:4.6}
Let $\Delta\subset \mathbb{R}^n$ be an $n$-dimensional integral Delzant
polytope. Then the Chow form
of $(X_\Delta,L_\Delta^i)$ is 
$T_{i\Delta}^{\mathbb{C}}$-semistable if and only if
\begin{equation}\label{e:4.2}
\frac{i^n(n+1)!\vol(\Delta)}{E_\Delta(i)}d_{i\Delta}\in
Ch_{i\Delta},
\end{equation}
where
$d_{i\Delta}\in
W(i\Delta)$ is
$$
d_{i\Delta}({\bf a})=1,\ \ {\bf a}\in \Delta\cap(\mathbb{Z}/i)^n.
$$
\end{prop}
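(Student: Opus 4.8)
The plan is to combine Proposition~\ref{prop:3.4} with the description of the affine hull of $Ch_{i\Delta}$ in Proposition~\ref{prop:4.5}. By Proposition~\ref{prop:3.4}, the Chow form is $T^{\mathbb{C}}_{i\Delta}$-semistable if and only if there exists $t\in\mathbb{R}$ with the constant function $(t,\dots,t)=t\,d_{i\Delta}$ lying in $Ch_{i\Delta}$. So the first step is to identify, among all scalar multiples $t\,d_{i\Delta}$, which one is a candidate to lie in $Ch_{i\Delta}$: since $Ch_{i\Delta}$ is contained in its affine hull, the first equation in \eqref{e:4.1} forces $\sum_{{\bf a}\in\Delta\cap(\mathbb{Z}/i)^n} t = t\,E_\Delta(i) = (n+1)!\,\vol(i\Delta) = (n+1)!\,i^n\vol(\Delta)$, hence $t = i^n(n+1)!\vol(\Delta)/E_\Delta(i)$. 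Thus $t\,d_{i\Delta}$ is forced to be exactly the point appearing in \eqref{e:4.2}.

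Next I would check that this uniquely determined point $t\,d_{i\Delta}$ automatically satisfies the \emph{second} constraint in \eqref{e:4.1}, i.e.\ lies in the whole affine hull of $Ch_{i\Delta}$, not just the first hyperplane. This is the step where one uses that $\Delta$ is a Delzant polytope (so $X_\Delta$ is smooth and the relevant centroid/moment identities hold); concretely, one must verify
\[
\sum_{{\bf a}\in\Delta\cap(\mathbb{Z}/i)^n} i\,t\,{\bf a} = (n+1)!\int_{i\Delta}{\bf x}\,dv.
\]
I expect this to follow from a standard lattice-point/volume symmetry of Delzant polytopes (an Ehrhart-type reciprocity or a direct computation of $\sum_{{\bf a}} {\bf a}$ against $\int_{i\Delta}{\bf x}\,dv$), and this is the only place the smoothness hypothesis is genuinely needed. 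Once both affine constraints hold, $t\,d_{i\Delta}$ lies in the affine hull of $Ch_{i\Delta}$, and Proposition~\ref{prop:3.4} reduces $T^{\mathbb{C}}_{i\Delta}$-semistability to the single membership statement $t\,d_{i\Delta}\in Ch_{i\Delta}$, which is \eqref{e:4.2}.

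Finally, I would assemble the two directions. If the Chow form is $T^{\mathbb{C}}_{i\Delta}$-semistable, Proposition~\ref{prop:3.4} gives some $t'\,d_{i\Delta}\in Ch_{i\Delta}\subset\text{(affine hull)}$, and the first hyperplane equation pins down $t'=t$, so \eqref{e:4.2} holds. Conversely, \eqref{e:4.2} says precisely $(t,\dots,t)\in Ch_{i\Delta}$, so Proposition~\ref{prop:3.4} yields semistability. The main obstacle is the middle step: confirming that the scalar multiple singled out by the first equation of \eqref{e:4.1} also meets the centroid condition, and tracking the factors of $i$ between $\Delta\cap(\mathbb{Z}/i)^n$ and $i\Delta\cap\mathbb{Z}^n$ so that the normalizing constant in \eqref{e:4.2} comes out exactly as $i^n(n+1)!\vol(\Delta)/E_\Delta(i)$. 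Everything else is a direct application of Propositions~\ref{prop:3.4} and~\ref{prop:4.5}.
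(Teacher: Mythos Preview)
Your final paragraph is a complete and correct proof, and it is exactly how the paper argues: Proposition~\ref{prop:3.4} says $T^{\mathbb C}_{i\Delta}$-semistability is equivalent to the existence of some $t$ with $t\,d_{i\Delta}\in Ch_{i\Delta}$, and the first linear equation in the affine hull description of Proposition~\ref{prop:4.5} forces that $t$ to be $i^n(n+1)!\vol(\Delta)/E_\Delta(i)$. Nothing more is needed.

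Your middle paragraph, however, is both unnecessary and incorrect. You do \emph{not} need to verify that the candidate point $t\,d_{i\Delta}$ satisfies the second (centroid) equation in \eqref{e:4.1}: the equivalence in Proposition~\ref{prop:4.6} holds regardless. If $t\,d_{i\Delta}$ happened to fail the centroid constraint, it would simply not lie in $Ch_{i\Delta}$, both sides of the equivalence would be false, and the biconditional would still be valid. More importantly, the identity you ``expect to follow from a standard lattice-point/volume symmetry of Delzant polytopes'' is in general \emph{false}: it is equivalent to
\[
\sum_{{\bf a}\in\Delta\cap(\mathbb Z/i)^n}{\bf a}=\frac{E_\Delta(i)}{\vol(\Delta)}\int_\Delta{\bf x}\,dv,
\]
which is precisely the nontrivial necessary condition for Chow semistability stated in Corollary~\ref{cor:4.7}. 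It does not hold for every integral Delzant polytope, and smoothness does not buy it for you. So drop the middle step entirely; the argument in your last paragraph already suffices.
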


\begin{proof}[Proof of Theorem \ref{thm:1.1}]

The condition \eqref{e:4.2} is equivalent to the following condition.

\begin{equation}\label{e:4.3}
\forall \varphi\in W(i\Delta),
\max\{(\varphi,\psi)\,|\,\psi\in Ch_{i\Delta}\}\ge
\frac{(n+1)!\vol(i\Delta)}{E_\Delta(i)}(\varphi,d_{i\Delta}).
\end{equation}

By Lemma $1.9$ of \cite{gkz} Chapter $7$ and
the definition of $g_\varphi$,
\eqref{e:4.3} is equivalent to
\begin{equation}\label{e:4.4}
\forall \varphi\in W(i\Delta),\int_\Delta g_\varphi dv\ge
\frac{\vol(\Delta)}{E_\Delta(i)}\sum_{{\bf a}\in \Delta\cap (\mathbb{Z}/i)^n}
g_\varphi({\bf a}).
\end{equation}
\end{proof}

Applying \eqref{e:1.1} to plus and minus of coordinate functions,
we have the following corollary.

\begin{cor}[\cite{ono}]\label{cor:4.7}
Let $\Delta\subset \mathbb{R}^n$ be an $n$-dimensional integral Delzant
polytope. If $(X_\Delta,L_\Delta^i)$ is Chow semistable for a positive
integer $i$ then
we have
\begin{equation}\label{e:4.5}
\sum_{{\bf a}\in \Delta\cap (\mathbb{Z}/i)^n}{\bf a}
=\frac{E_\Delta(i)}{\vol(\Delta)}
\int_\Delta {\bf x}\, dv.
\end{equation}
\end{cor}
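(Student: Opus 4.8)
The plan is to deduce the corollary from Theorem~\ref{thm:1.1} by feeding it the affine coordinate functions on $\Delta$. First I would invoke the Hilbert--Mumford criterion (Proposition~\ref{prop:3.1}): Chow semistability of $(X_\Delta,L_\Delta^i)$ is $SL(E_\Delta(i),\mathbb{C})$-semistability of the Chow form, which in particular forces $H$-semistability for the maximal torus $H=T_{i\Delta}^{\mathbb{C}}=T^{E_\Delta(i)-1}_{\mathbb{C}}$. By Theorem~\ref{thm:1.1} this is equivalent to $P_\Delta(i;g)\ge 0$ for every concave $g\in PL(\Delta;i)$.

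Next I would verify that each coordinate function $x_k\colon\Delta\to\mathbb{R}$ and its negative $-x_k$ lie in $PL(\Delta;i)$. For $x_k$, take $\varphi\in W(i\Delta)$ given by $\varphi(\mathbf{a})=a_k$; I claim $g_\varphi=x_k$ on $\Delta$. Because the vertices of $\Delta$ are lattice points, $\Delta$ is the convex hull of $\Delta\cap(\mathbb{Z}/i)^n$, so for any $\mathbf{x}\in\Delta$ the point $(\mathbf{x},x_k)$ is a convex combination of the points $(\mathbf{a},a_k)$ and hence lies in $G_\varphi$; conversely, any $(\mathbf{x},t)\in G_\varphi$ is a convex combination of points $(\mathbf{a}_j,t_j)$ with $t_j\le (\mathbf{a}_j)_k$, so $t\le x_k$. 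Thus $g_\varphi(\mathbf{x})=x_k$, and the same argument with $\varphi(\mathbf{a})=-a_k$ gives $-x_k\in PL(\Delta;i)$. Both $x_k$ and $-x_k$ are affine, in particular concave, so Theorem~\ref{thm:1.1} applies to each.

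Finally I would write out the two resulting inequalities. Applying \eqref{e:1.1} to $g=x_k$ gives $E_\Delta(i)\int_\Delta x_k\,dv-\vol(\Delta)\sum_{\mathbf{a}}a_k\ge 0$, and applying it to $g=-x_k$ gives the reverse inequality. Hence $E_\Delta(i)\int_\Delta x_k\,dv=\vol(\Delta)\sum_{\mathbf{a}\in\Delta\cap(\mathbb{Z}/i)^n}a_k$ for each $k=1,\dots,n$, which is exactly the $k$-th component of \eqref{e:4.5}.

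The computation itself is essentially immediate once Theorem~\ref{thm:1.1} is in hand; the only step needing care is the verification that $\pm x_k\in PL(\Delta;i)$, i.e.\ that the upper-envelope construction $\varphi\mapsto g_\varphi$ returns an affine function precisely when $\varphi$ is the restriction of an affine function to the lattice points. This in turn rests on $\Delta$ being the convex hull of $\Delta\cap(\mathbb{Z}/i)^n$, which is immediate from the integrality of the vertices of a Delzant polytope.
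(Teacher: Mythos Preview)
Your proof is correct and follows exactly the paper's approach: the paper simply says ``Applying \eqref{e:1.1} to plus and minus of coordinate functions'' and states the corollary. Your version spells out the two points the paper leaves implicit---the Hilbert--Mumford reduction from full Chow semistability to $T_{i\Delta}^{\mathbb{C}}$-semistability, and the verification that $\pm x_k\in PL(\Delta;i)$---but the route is the same.
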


\section{K-semistability}

For polarized manifolds, the notion of K-stability is defined by
Donaldson in \cite{d}. In toric case, restricting test configurations
to toric degenerations, he also defined the following notion.

\begin{defn}[\cite{d}]\label{def:5.1}
Let $(X_\Delta,L_\Delta)$ be an $n$-dimensional polarized toric manifold.
If the inequality
\begin{equation}\label{e:5.1}
\frac{\vol(\partial \Delta)}{\vol(\Delta)}\int_\Delta hdv
-\int_{\partial \Delta}hd\sigma\le 0
\end{equation}
holds for any rational convex piecewise linear function $h:\Delta\to
\mathbb{R}$, $(X_\Delta,L_\Delta)$ is called K-semistable for
toric degenerations.
\end{defn}

Ross and Thomas, in \cite{rt}, showed that asymptotically Chow semistable
polarized manifold is K-semistable. In toric case, we can partly prove
this fact by Theorem \ref{thm:1.1}
without any knowledge of Riemann-Roch theorem and test configurations
as follows.

\begin{prop}\label{prop:5.2}
If $(X_\Delta,L_\Delta)$ is asymptotically Chow semistable,
then $(X_\Delta,L_\Delta)$ is K-semistable for toric degenerations.
\end{prop}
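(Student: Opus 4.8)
The plan is to derive the K-semistability inequality \eqref{e:5.1} as an asymptotic consequence of the Chow semistability inequality \eqref{e:1.1}, by choosing $i$ large, applying \eqref{e:1.1} to a suitable rescaled function, and reading off the leading-order behavior in $i$. First I would fix a rational convex piecewise linear function $h:\Delta\to\mathbb{R}$ and set $g:=-h$, which is concave and rational. By Remark following the definition of $PL(\Delta;i)$, there is a positive integer $i_0$ with $g\in PL_{\mathbb{Q}}(\Delta;i_0)$, and then $g\in PL(\Delta;ki_0)$ for all $k\ge 1$ by the second Remark. Since $(X_\Delta,L_\Delta)$ is asymptotically Chow semistable, \eqref{e:1.1} holds for all sufficiently large $i$; restricting to the subsequence $i=ki_0$ (which is harmless, since $k\to\infty$), we get
\begin{equation*}
E_\Delta(ki_0)\int_\Delta g\,dv \;\ge\; \vol(\Delta)\sum_{{\bf a}\in\Delta\cap(\mathbb{Z}/ki_0)^n} g({\bf a}),\qquad k\gg 0.
\end{equation*}

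Next I would expand both sides in powers of $i=ki_0$. Using \eqref{e:2.1}, $E_\Delta(i)=\vol(\Delta)i^n+\tfrac{1}{2}\vol(\partial\Delta)i^{n-1}+O(i^{n-2})$, so the left side is $\vol(\Delta)i^n\int_\Delta g\,dv+\tfrac12\vol(\partial\Delta)i^{n-1}\int_\Delta g\,dv+O(i^{n-2})$. The key point is the asymptotics of the Riemann-type sum $\sum_{{\bf a}\in\Delta\cap(\mathbb{Z}/i)^n}g({\bf a})$. Writing it as $\sum_{{\bf m}\in i\Delta\cap\mathbb{Z}^n}g({\bf m}/i)$, a standard Euler–Maclaurin / lattice-point count for the piecewise-linear integrand gives
\begin{equation*}
\sum_{{\bf a}\in\Delta\cap(\mathbb{Z}/i)^n} g({\bf a}) \;=\; i^n\int_\Delta g\,dv \;+\; \frac{i^{n-1}}{2}\int_{\partial\Delta} g\,d\sigma \;+\; O(i^{n-2}),
\end{equation*}
with $d\sigma$ the very boundary measure normalized in Section 2 (this normalization is exactly what makes the $i^{n-1}$ coefficient of $E_\Delta$ equal $\tfrac12\vol(\partial\Delta)$, i.e.\ the case $g\equiv 1$). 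Substituting, the $i^n$ terms cancel, and dividing the inequality by $i^{n-1}$ and letting $i=ki_0\to\infty$ leaves
\begin{equation*}
\frac{\vol(\partial\Delta)}{2}\int_\Delta g\,dv \;\ge\; \frac{\vol(\Delta)}{2}\int_{\partial\Delta} g\,d\sigma,
\end{equation*}
that is, $\tfrac{\vol(\partial\Delta)}{\vol(\Delta)}\int_\Delta g\,dv\ge\int_{\partial\Delta}g\,d\sigma$. Replacing $g=-h$ and multiplying by $-1$ reverses the inequality to give exactly \eqref{e:5.1}. Since $h$ was an arbitrary rational convex piecewise linear function, this proves $(X_\Delta,L_\Delta)$ is K-semistable for toric degenerations.

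The main obstacle is justifying the two-term asymptotic expansion of the lattice sum $\sum_{{\bf a}\in\Delta\cap(\mathbb{Z}/i)^n}g({\bf a})$ with the correct $d\sigma$ on the boundary term, since $g$ is only piecewise linear rather than smooth. I would handle this by decomposing $\Delta$ into the (finitely many) top-dimensional polytopes on which $g$ is affine — using a common refinement with the lattice defined by $i_0$ so that the pieces have rational vertices — and applying the Ehrhart-type expansion \eqref{e:2.1} on each subpolytope $\Delta_j$ to $g|_{\Delta_j}$ affine: there $\sum_{{\bf a}}g({\bf a})$ over $\Delta_j\cap(\mathbb{Z}/i)^n$ expands with leading term $i^n\int_{\Delta_j}g\,dv$ and next term $\tfrac{i^{n-1}}{2}\int_{\partial\Delta_j}g\,d\sigma_j$. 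Summing over $j$, the contributions along interior facets shared by two pieces cancel (the outward conormals are opposite and $g$ is continuous across them), leaving only $\tfrac{i^{n-1}}{2}\int_{\partial\Delta}g\,d\sigma$; one must check the normalization of $d\sigma_j$ on each interior facet matches from both sides, which follows because on any rational hyperplane the induced measure in Section 2's convention is intrinsic to the hyperplane. Alternatively, one can cite the known Ehrhart-type expansion for sums of piecewise-linear functions, or invoke \eqref{e:2.1} itself applied to the polytopes $\{({\bf x},t):{\bf x}\in\Delta,\ 0\le t\le g({\bf x})+C\}$ in $\mathbb{R}^{n+1}$ for a large constant $C$, whose lattice point count encodes $\sum g({\bf a})$ up to elementary terms.
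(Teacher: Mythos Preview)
Your argument is correct and mirrors the paper's proof almost exactly: set $g=-h$, choose $i_0$ with $g\in PL_{\mathbb{Q}}(\Delta;i_0)$, apply the Chow inequality \eqref{e:1.1} from Theorem~\ref{thm:1.1} along $i=ki_0$, expand both sides to order $i^{n-1}$, and read off the K-semistability inequality from the surviving coefficient. The only difference is that the paper obtains the two-term expansion of $\sum_{{\bf a}\in\Delta\cap(\mathbb{Z}/i)^n}g({\bf a})$ by citing Lemma~3.3 of \cite{zz}, whereas you sketch a direct subdivision argument (which is fine, though note that the interior-facet cancellation works not because $d\sigma$ is signed---it is not---but because the doubled boundary integral from the two adjacent cells is exactly offset by the lattice-point overcount along their common facet).
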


\begin{proof}
Let $h:\Delta\to \mathbb{R}$ be a rational convex piecewise linear function.
Then $g=-h$ is a rational concave piecewise linear function on $\Delta$.
Thus there is a positive integer $i$ such that
$g\in PL_{\mathbb{Q}}(\Delta;i)$.
Since $(X_\Delta,L_\Delta)$ is asymptotically Chow semistable,
\begin{equation}\label{e:5.2}
P_\Delta(ki;g)=E_\Delta(ki)\int_{\Delta} g dv-
\vol(\Delta)\sum_{{\bf a}\in \Delta\cap (\mathbb{Z}/(ki))^n}
g({\bf a})\ge 0
\end{equation}
holds for any positive integer $k$ by Theorem \ref{thm:1.1}.
Note here that
\begin{equation}\label{e:5.3}
E_\Delta(ki)=(ki)^n\vol (\Delta)+\frac{(ki)^{n-1}}{2}\vol(\partial \Delta)+
O((ki)^{n-2}).
\end{equation}
Moreover by Lemma $3.3$ of \cite{zz},
\begin{equation}\label{e:5.4}
\sum_{{\bf a}\in \Delta\cap (\mathbb{Z}/ik)^n}g({\bf a})=
(ki)^n\int_\Delta gdv+\frac{(ki)^{n-1}}{2}\int_{\partial \Delta}gd\sigma+
O((ki)^{n-2})
\end{equation}
holds. Therefore we have

\begin{equation}\label{e:5.6}
\begin{split}
P_\Delta(ki;g)
=\frac{(ki)^{n-1}\vol(\Delta)}{2}\left(
\frac{\vol(\partial \Delta)}{\vol(\Delta)}\int_\Delta gdv-
\int_{\partial \Delta}gd\sigma
\right)+O((ki)^{n-2})
\end{split}
\end{equation}
holds. This proposition follows immediately from \eqref{e:5.2} and
\eqref{e:5.6}.
\end{proof}

By \eqref{e:5.6},
the following proposition trivially holds.

\begin{prop}\label{thm:1.2}
An $n$-dimensional polarized toric manifold $(X_\Delta,L_\Delta)$
is K-semistable for toric degenerations
if and only if the one of the following holds
for any positive integer $i$ and for any $g\in PL_{\mathbb{Q}}(\Delta,i)$:
\begin{equation}\label{e:1.2}
\begin{split}
& P_\Delta(ki;g)
> 0,\ \ \forall k\gg 0\ 
\text{ or }\ 
\frac{P_\Delta(ki;g)
}{(ki)^{n-1}}
\to 0\ \ (k\to \infty).
\end{split}
\end{equation}
\end{prop}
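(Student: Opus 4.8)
The plan is to read off everything from the asymptotic expansion \eqref{e:5.6} that was already established in the proof of Proposition \ref{prop:5.2}. Fix a positive integer $i$ and a function $g \in PL_{\mathbb{Q}}(\Delta;i)$, and write
$$
Q(g) := \frac{\vol(\partial\Delta)}{\vol(\Delta)}\int_\Delta g\,dv - \int_{\partial\Delta} g\,d\sigma,
$$
so that \eqref{e:5.6} reads $P_\Delta(ki;g) = \tfrac{(ki)^{n-1}\vol(\Delta)}{2}Q(g) + O((ki)^{n-2})$, hence $P_\Delta(ki;g)/(ki)^{n-1} \to \tfrac{\vol(\Delta)}{2}Q(g)$ as $k\to\infty$. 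Recall also that $g = -h$ for the convex function $h$, and $Q(g) = -\bigl(\tfrac{\vol(\partial\Delta)}{\vol(\Delta)}\int_\Delta h\,dv - \int_{\partial\Delta} h\,d\sigma\bigr)$, so the Donaldson inequality \eqref{e:5.1} for $h$ is exactly the statement $Q(g) \ge 0$.

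First I would prove the forward direction. Assume $(X_\Delta,L_\Delta)$ is K-semistable for toric degenerations, i.e. \eqref{e:5.1} holds for every rational convex piecewise linear $h$. Given any $i$ and any $g \in PL_{\mathbb{Q}}(\Delta;i)$, the function $h = -g$ is rational convex piecewise linear, so $Q(g) \ge 0$. If $Q(g) > 0$, then \eqref{e:5.6} forces $P_\Delta(ki;g) > 0$ for all $k \gg 0$, giving the first alternative of \eqref{e:1.2}. If $Q(g) = 0$, then $P_\Delta(ki;g)/(ki)^{n-1} \to \tfrac{\vol(\Delta)}{2}Q(g) = 0$, giving the second alternative. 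Either way \eqref{e:1.2} holds.

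Conversely, suppose \eqref{e:1.2} holds for every $i$ and every $g \in PL_{\mathbb{Q}}(\Delta;i)$. Let $h$ be an arbitrary rational convex piecewise linear function on $\Delta$; then $g = -h$ is rational concave piecewise linear, so by the Remark after the definition of $PL(\Delta;i)$ there is a positive integer $i$ with $g \in PL_{\mathbb{Q}}(\Delta;i)$. In either case of \eqref{e:1.2} we get $\liminf_{k\to\infty} P_\Delta(ki;g)/(ki)^{n-1} \ge 0$, and comparing with the limit $\tfrac{\vol(\Delta)}{2}Q(g)$ computed from \eqref{e:5.6} yields $Q(g) \ge 0$, which is precisely \eqref{e:5.1} for $h$. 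Since $h$ was arbitrary, $(X_\Delta,L_\Delta)$ is K-semistable for toric degenerations.

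There is essentially no obstacle here: the only mild point to be careful about is that \eqref{e:1.2} is phrased as a dichotomy rather than a single inequality, so one must check both alternatives separately and note that each implies $\liminf P_\Delta(ki;g)/(ki)^{n-1} \ge 0$; combined with the exact limit from \eqref{e:5.6}, this is equivalent to $Q(g)\ge 0$, and then to \eqref{e:5.1}. This is exactly why the proposition is labeled as holding "trivially" once \eqref{e:5.6} is in hand.
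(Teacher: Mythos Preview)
Your proof is correct and follows exactly the approach the paper indicates: the paper simply asserts that the proposition ``trivially holds'' by \eqref{e:5.6}, and you have written out precisely those trivial details, splitting into the cases $Q(g)>0$ and $Q(g)=0$ for the forward direction and using the existence of the limit $\tfrac{\vol(\Delta)}{2}Q(g)$ from \eqref{e:5.6} for the converse.
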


By Theorem \ref{thm:1.1},
asymptotic Chow semistablity of $(X_\Delta,L_\Delta)$ correoponds to 
non-negativity of the polynomial $P_\Delta(ki;g)$.
On the other hand, $(X_\Delta,L_\Delta)$ is K-semistable
for toric degenerations if and only if the leading coefficients
of the polynomial $P_\Delta(ki;g)$ is non-negative.
Hence asymptotic Chow semistability is much
stronger than
K-semistability. In fact we showed in \cite{osy} that
there exists a $7$-dimensional K-polystable toric Fano manifold
which is not asymptotically Chow semistable.
In this case, by Corollary $1.7$ of \cite{ono}, there is a rational
linear function
$l$ on $\Delta$ such that
$
P_\Delta(i;l)
<0
$
for any sufficiently large integer $i$.
Hence $l$ is a destabilizing object for asymptotic
Chow semistability.
On the other hand, since the Futaki invariant of this Fano manifold
vanishes, we see that the leading coefficient
of $P_\Delta(i;l)$
$$
\frac{\vol(\partial \Delta)}{\vol(\Delta)}\int_\Delta ldv
-\int_{\partial \Delta}ld\sigma
$$
vanishes.
Thus
$$
\frac{P_\Delta(i;l)}{i^{n-1}}\to 0\ \ (i\to \infty).
$$
Therefore $l$ is not a destabilizing object for K-semistability.

\section{Relative K-semistability}

The notion of relative K-stability was defined in \cite{s}.
As Definition \ref{def:5.1}, we defined the following.

\begin{defn}[\cite{zz}]\label{def:6.1}
Let $(X_\Delta,L_\Delta)$ be an $n$-dimensional polarized toric manifold.
If the inequality
\begin{equation}\label{e:6.1}
\int_\Delta \left(\frac{\vol(\partial \Delta)}{\vol(\Delta)}+
\theta_\Delta\right)hdv
-\int_{\partial \Delta}hd\sigma\le 0
\end{equation}
holds for any rational convex piecewise linear function $h:\Delta\to
\mathbb{R}$, $(X_\Delta,L_\Delta)$ is called relative K-semistable for
toric degenerations. Here
$\theta_\Delta:\Delta\to \mathbb{R}$
is the affine linear function, defined in Lemma $1.1$ of \cite{zz},
corresponding to the extremal vector field of 
$(X_\Delta,L_\Delta)$.
\end{defn}

For any sufficiently large integer $i$ and
a continuous function $g$ on $\Delta$ we have
\begin{equation}\label{e:6.2}
\sum_{{\bf a}\in \Delta\cap (\mathbb{Z}/i)^n}\theta_\Delta({\bf a})
g({\bf a})
=i^n \int_\Delta \theta_\Delta gdv
+O(i^{n-1}).
\end{equation}

Thus for any
$g\in PL_{\mathbb{Q}}(\Delta;i)$ and sufficiently large integer $k$
\begin{equation}\label{e:6.3}
\begin{split}
&Q_\Delta(ki;g):=E_\Delta(ki)\int_{\Delta} g dv+
\vol(\Delta)\sum_{{\bf a}\in \Delta\cap (\mathbb{Z}/(ki))^n}
(\frac{1}{2ki}\theta_\Delta({\bf a})-1)g({\bf a})\\
&=\frac{(ki)^{n-1}\vol(\Delta)}{2}\left\{
\int_\Delta \left(\frac{\vol(\partial \Delta)}{\vol(\Delta)}
+\theta_\Delta\right)gdv-
\int_{\partial \Delta}gd\sigma
\right\}+O((ki)^{n-2}).
\end{split}
\end{equation}

Thus, as Proposition \ref{thm:1.2}, we see the following.

\begin{prop}\label{thm:1.3}
An $n$-dimensional polarized toric manifold
$(X_\Delta,L_\Delta)$ is relative K-semistable for
toric degenerations
if and only if the one of the following holds
for any positive integer $i$ and for any $g\in PL_{\mathbb{Q}}(\Delta,i)$:
\begin{equation}\label{e:1.3}
\begin{split}
& Q_\Delta(ki;g)
\ge 0,\ \forall k\gg 0\ 
\text{ or }\ 
\frac{Q_\Delta(ki;g)}{(ki)^{n-1}}\to 0\ \ (k\to \infty).
\end{split}
\end{equation}
\end{prop}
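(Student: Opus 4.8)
The plan is to read the assertion directly off the asymptotic expansion \eqref{e:6.3}, exactly as Proposition~\ref{thm:1.2} is read off \eqref{e:5.6}. First I would abbreviate the ``leading coefficient'' functional by
\[
\mathcal{L}(g):=\int_\Delta \left(\frac{\vol(\partial\Delta)}{\vol(\Delta)}+\theta_\Delta\right)g\,dv-\int_{\partial\Delta}g\,d\sigma ,
\]
so that \eqref{e:6.3} becomes $Q_\Delta(ki;g)=\tfrac12(ki)^{n-1}\vol(\Delta)\,\mathcal{L}(g)+O((ki)^{n-2})$, and I would record that $\mathcal{L}$ is linear in $g$. By Definition~\ref{def:6.1}, $(X_\Delta,L_\Delta)$ is relative K-semistable for toric degenerations iff $\mathcal{L}(h)\le 0$ for every rational convex piecewise linear $h:\Delta\to\mathbb{R}$; substituting $h=-g$ and using linearity, this is the same as $\mathcal{L}(g)\ge 0$ for every rational concave piecewise linear $g:\Delta\to\mathbb{R}$. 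By the first remark in Section~2 every such $g$ lies in $PL_{\mathbb{Q}}(\Delta;i)$ for some positive integer $i$, and conversely every element of $PL_{\mathbb{Q}}(\Delta;i)$ is rational concave piecewise linear; hence relative K-semistability is equivalent to the single statement that $\mathcal{L}(g)\ge 0$ for all positive integers $i$ and all $g\in PL_{\mathbb{Q}}(\Delta;i)$.

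It then remains to check, for a fixed $i$ and a fixed $g\in PL_{\mathbb{Q}}(\Delta;i)$, that the dichotomy in \eqref{e:1.3} holds if and only if $\mathcal{L}(g)\ge 0$, and here I would simply split into cases on the sign of $\mathcal{L}(g)$ using \eqref{e:6.3}. If $\mathcal{L}(g)>0$, the leading term of \eqref{e:6.3} dominates, so $Q_\Delta(ki;g)>0$ for all $k\gg 0$ and the first alternative holds; if $\mathcal{L}(g)=0$, then \eqref{e:6.3} gives $Q_\Delta(ki;g)=O((ki)^{n-2})$, whence $Q_\Delta(ki;g)/(ki)^{n-1}\to 0$ and the second alternative holds. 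Conversely, if $\mathcal{L}(g)<0$, then $Q_\Delta(ki;g)\to-\infty$ (so the first alternative fails) and $Q_\Delta(ki;g)/(ki)^{n-1}\to\tfrac12\vol(\Delta)\mathcal{L}(g)\neq 0$ (so the second fails), i.e.\ the dichotomy fails. Combining this pointwise equivalence with the reduction of the previous paragraph gives the proposition. The argument is word-for-word that of Proposition~\ref{thm:1.2}, with $P_\Delta$ replaced by $Q_\Delta$ and \eqref{e:5.6} replaced by \eqref{e:6.3}.

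Since \eqref{e:6.3} is already in hand (it rests on \eqref{e:6.2}, \eqref{e:5.3}, and Lemma~3.3 of \cite{zz}), there is no genuine obstacle in this proof; the only point I would take care to emphasize is why the ``or'' in \eqref{e:1.3} cannot be dropped. Relative K-semistability constrains only the leading coefficient $\mathcal{L}(g)$ of the polynomial $k\mapsto Q_\Delta(ki;g)$, so whenever $\mathcal{L}(g)=0$ the lower-order terms may force $Q_\Delta(ki;g)<0$ for all large $k$ without contradicting Definition~\ref{def:6.1} --- precisely the phenomenon discussed after Proposition~\ref{thm:1.2} and realized by the example of \cite{osy} --- and this is exactly what the second alternative $Q_\Delta(ki;g)/(ki)^{n-1}\to 0$ is there to absorb. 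I would close with one sentence making that parallel explicit.
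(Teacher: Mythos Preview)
Your proposal is correct and is exactly the argument the paper intends: the proposition is asserted to follow from the expansion \eqref{e:6.3} in the same way Proposition~\ref{thm:1.2} follows from \eqref{e:5.6}, and your case split on the sign of $\mathcal{L}(g)$ together with the identification of rational concave piecewise linear functions with $\bigcup_i PL_{\mathbb{Q}}(\Delta;i)$ spells out precisely that deduction.
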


Finally we see that some GIT-semistability implies
relative K-semistability for toric degenerations.

\begin{defn}\label{def:6.2}
$(X_\Delta,L_\Delta^i)$ is relative Chow semistable for
$T^\mathbb{C}_{i\Delta}$-action if
\begin{equation}\label{e:6.4}
\frac{i^n(n+1)!\vol(\Delta)}{E_\Delta(i)}(d_{i\Delta}-\frac{1}{2i}\theta_
\Delta)\in Ch_{i\Delta}.
\end{equation}
$(X_\Delta,L_\Delta)$ is asymptotically relative Chow semistable
in toric sense if there exists a positive integer $i_0$ such that
for any integer $i\ge i_0$ \eqref{e:6.4} holds.
\end{defn}

\begin{prop}\label{prop:6.3}
If $(X_\Delta,L_\Delta)$ is asymptotically relative Chow semistable
in toric sense, then $(X_\Delta,L_\Delta)$ is relative
K-semistable for toric degenerations. 
\end{prop}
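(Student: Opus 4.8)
The plan is to reduce the statement to Theorem~\ref{thm:1.1} in its relative incarnation. Concretely, I would first show that condition \eqref{e:6.4} forces
\[
Q_\Delta(i;g)\ge 0\qquad\text{for every }g\in PL(\Delta;i),
\]
where $Q_\Delta$ is the quantity introduced in \eqref{e:6.3}; this is the relative counterpart of the passage from \eqref{e:4.2} to \eqref{e:4.4} carried out in the proof of Theorem~\ref{thm:1.1}. Granting this, the conclusion follows by imitating Proposition~\ref{prop:5.2}, in fact more directly: asymptotic relative Chow semistability in toric sense yields $Q_\Delta(ki;g)\ge 0$ for all large $k$, which is exactly the first of the two alternatives listed in Proposition~\ref{thm:1.3}.

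For the first step I would run the argument in the proof of Theorem~\ref{thm:1.1} verbatim, with the vector $d_{i\Delta}$ replaced throughout by $d_{i\Delta}-\tfrac{1}{2i}\theta_\Delta$. Since $Ch_{i\Delta}$ is a compact convex polytope, a point of $W(i\Delta)$ belongs to it precisely when it is dominated by the support function of $Ch_{i\Delta}$; thus \eqref{e:6.4} is equivalent to
\[
\max_{\psi\in Ch_{i\Delta}}(\varphi,\psi)\ \ge\ \frac{i^n(n+1)!\,\vol(\Delta)}{E_\Delta(i)}\Bigl(\varphi,\ d_{i\Delta}-\tfrac{1}{2i}\theta_\Delta\Bigr)\qquad\text{for all }\varphi\in W(i\Delta).
\]
Now $(\varphi,d_{i\Delta}-\tfrac{1}{2i}\theta_\Delta)=\sum_{{\bf a}\in\Delta\cap(\mathbb{Z}/i)^n}\bigl(1-\tfrac{1}{2i}\theta_\Delta({\bf a})\bigr)\varphi({\bf a})$, while the left-hand side is untouched, so Lemma~$1.9$ of \cite{gkz} Chapter~$7$ together with the definition of $g_\varphi$ transforms the displayed inequality exactly as in the proof of Theorem~\ref{thm:1.1} into
\[
\int_\Delta g_\varphi\,dv\ \ge\ \frac{\vol(\Delta)}{E_\Delta(i)}\sum_{{\bf a}\in\Delta\cap(\mathbb{Z}/i)^n}\bigl(1-\tfrac{1}{2i}\theta_\Delta({\bf a})\bigr)g_\varphi({\bf a})\qquad\text{for all }\varphi,
\]
the passage $\varphi\rightsquigarrow g_\varphi$ on the right being the same one as in the proof of Theorem~\ref{thm:1.1} (one may first replace $\varphi$ by the function ${\bf a}\mapsto g_\varphi({\bf a})$, which has the same associated function $g_\varphi$). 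Multiplying by $E_\Delta(i)$ and comparing with \eqref{e:6.3}, this last inequality says precisely $Q_\Delta(i;g_\varphi)\ge 0$; since $PL(\Delta;i)=\{g_\varphi\mid\varphi\in W(i\Delta)\}$, the claim follows. I expect this transcription to be the only real content of the proof: the argument is formally identical to that of Theorem~\ref{thm:1.1}, the single new ingredient being the affine weight $1-\tfrac{1}{2i}\theta_\Delta$, which causes no trouble (and is, in any case, positive on $\Delta$ for $i$ large, since $\theta_\Delta$ is a fixed affine function on the compact polytope $\Delta$).

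It remains to conclude. Suppose $(X_\Delta,L_\Delta)$ is asymptotically relative Chow semistable in toric sense, so that \eqref{e:6.4} holds for every $i\ge i_0$. By the first step, $Q_\Delta(i;g)\ge 0$ for every $i\ge i_0$ and every $g\in PL(\Delta;i)$. Fix now an arbitrary positive integer $i_1$ and an arbitrary $g\in PL_{\mathbb{Q}}(\Delta;i_1)$. Since $g\in PL(\Delta;ki_1)$ for every $k\ge 1$, we get $Q_\Delta(ki_1;g)\ge 0$ whenever $ki_1\ge i_0$, hence $Q_\Delta(ki_1;g)\ge 0$ for all sufficiently large $k$; this is the first of the two alternatives in \eqref{e:1.3}. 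As $i_1$ and $g\in PL_{\mathbb{Q}}(\Delta;i_1)$ were arbitrary, Proposition~\ref{thm:1.3} shows that $(X_\Delta,L_\Delta)$ is relative K-semistable for toric degenerations, which is the assertion of Proposition~\ref{prop:6.3}.
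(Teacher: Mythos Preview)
Your proposal is correct and follows essentially the same route as the paper: the paper's proof consists precisely of rewriting \eqref{e:6.4} as the inequality \eqref{e:6.5} (your displayed support-function inequality followed by the GKZ lemma) and then passing to \eqref{e:6.6} by the substitution $\varphi\rightsquigarrow g_\varphi$, after which relative K-semistability follows from the expansion \eqref{e:6.3}. You have simply spelled out the details more fully, including the explicit appeal to Proposition~\ref{thm:1.3} that the paper leaves implicit.
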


\begin{proof}
Suppose that
$(X_\Delta,L_\Delta)$ is asymptotically relative Chow semistable
in toric sense. It is equivalent to
\begin{equation}\label{e:6.5}
\forall \varphi\in W(i\Delta),\int_{\Delta} g_\varphi dv\ge
\frac{\vol(\Delta)}{E_\Delta(i)}\sum_{{\bf a}\in \Delta\cap (\mathbb{Z}/i)^n}
(1-\frac{1}{2i}\theta_\Delta({\bf a}))\varphi({\bf a})
\end{equation}
for any integer $i\ge i_0$. Hence it implies that
for any $g\in PL(\Delta;i)$
\begin{equation}\label{e:6.6}
\int _\Delta gdv\ge \frac{\vol(\Delta)}{E_\Delta(i)}\sum_
{{\bf a}\in \Delta\cap (\mathbb{Z}/i)^n}(1-\frac{\theta_\Delta({\bf a})}
{2i})g({\bf a}).
\end{equation}
\end{proof}

\end{document}